\newtheorem{theorem}{Theorem}
\newtheorem{proposition}{Proposition}
\def\nil{{\mathrm{Nil}\, }}
\def\sl{\widetilde{\mathrm{PSL}_2 (\mathbb{R})}}
\begin{document}

\thispagestyle{empty}

\title{Willmore--like functionals for surfaces in  $3$--dimensional Thurston geometries
\let\thefootnote\relax\footnotetext{2010 Mathematics 
	Subject Classification. Primary 53C42; Secondary 57R70. 
	}
\author{D.~Berdinsky
 \and Y.~Vyatkin
 }
}


\date{}
\maketitle {\small
\begin{quote}
\noindent{\sc Abstract.  } We find analogues of the Willmore functional for each of the Thurston geometries with $4$--dimensional isometry group such that the CMC--spheres in these geometries are critical points of these functionals.

\end{quote}
}

 \section{Introduction}

  Let $M$ be a closed orientable surface and $f \colon M \to N$ be an immersion of $M$
  into a $3$--dimensional Riemannian manifold $N$.
  Set:
  \begin{equation*}
     {\cal W} (f) =    \int_{M} \left( H^2 + \overline{K} \right) d \mu,
  \end{equation*}
  where $H$ is the mean curvature of the immersed surface,  the value of $\overline{K}$
  at a point $p \in M$ is defined as the sectional curvature of the $2$--plane $f_* (T_p M)$ in $N$, $d \mu$ is the area element
  of the induced metric on $M$.
  We will refer to the functional ${\cal W}$ as the Willmore functional.
  It is known that ${\cal W} (f)$ is a conformal invariant \cite{Weiner}.

  In the $3$--dimensional space forms $\mathbb{R}^3, \mathbb{H}^3$ and $\mathbb{S}^3$ the functional ${\cal W}$
  enjoys the property that the CMC spheres are critical points of
  ${\cal W}$; recall that in the $3$--dimensional space forms the CMC spheres are exactly the round spheres by the Hopf
  theorem. However, this property for the Willmore functional ${\cal W}$ fails to hold in the other $3$--dimensional Thurston
  geometries.

  In this paper we will introduce the Willmore--like functionals
  for the certain family of Riemannian manifolds $E(k,\tau)$ that include
  the model spaces for all Thurston geometries with $4$--dimensional group of isometries,
  i.e., the products $\mathbb{S}^2 \times \mathbb{R}$ and $\mathbb{H}^2 \times
  \mathbb{R}$, the Heisenberg group $\nil$ and the Lie group $\sl$.
  The functionals to be introduced in these geometries have the
  form:
  \begin{equation}
  \label{willmintrogen}
        \int_{M} \left( H^2 + \alpha \overline{K} + \beta \right) d
        \mu,
  \end{equation}
  where $\alpha$ and $\beta$ are some constants that depend on $k$ and
  $\tau$.
  In the case of the Heisenberg group $\nil$ (for $k=0$ and $\tau = \frac{1}{2}$) the
  functional:
  \[
       \int_{M} \left(  H^2 +  \frac{1}{4} \overline{K} -
       \frac{1}{16} \right) d \mu
  \]
  was obtained in \cite{BT05} based on the Weierstrass representation for surfaces in $\nil$.
  Then it was shown \cite{BT07} that the CMC
  spheres in $\nil$ are critical points of this functional.
  In the case of the Lie group $\sl$ (for $k = -1$ and $\tau = -
  \frac{1}{2}$) it was shown that for the functional:
   \[
       \int_{M} \left(  H^2 +  \frac{1}{4} \overline{K} -
       \frac{5}{16} \right) d \mu
  \]
  the minimum among the rotationally invariant spheres is attained
  exactly at the CMC spheres~\cite{Berd10}.

 The main result of the paper is the following theorem.
 \begin{theorem} \label{thm:main}
  The CMC spheres in $E(k,\tau)$ are critical points of the following
 Willmore--like  functional:
 \begin{equation}
 \label{Eosnovformula}  
   E(f) = \int_M \left( H^2 + \frac{1}{4} \overline{K} + \frac{k}{4} - \frac
   {\tau^2}{4} \right) d\mu.
 \end{equation}
 \end{theorem}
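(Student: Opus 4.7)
The plan is to compute the first variation of $E(f)$ along a smooth normal perturbation $\phi N$, extract the Euler--Lagrange operator $\mathcal{E}[f]$, and then verify $\mathcal{E}[f]\equiv 0$ on every CMC sphere in $E(k,\tau)$ using the Abresch--Rosenberg theorem.

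The variational inputs are three: the classical formulas $\delta\,d\mu = -2H\phi\,d\mu$ and $\delta H = \tfrac12\Delta\phi + \tfrac12(|A|^2 + \overline{\mathrm{Ric}}(N,N))\phi$, valid in any Riemannian $3$--manifold; the $E(k,\tau)$--specific identity $\overline{K} = \tau^2 + (k-4\tau^2)\nu^2$, where $\nu = \langle N,\xi\rangle$ is the angle with the unit vertical Killing field $\xi$, together with the companion identity $\overline{\mathrm{Ric}}(N,N) = k - \tau^2 - \overline{K}$ obtained by decomposing $N$ into vertical and horizontal parts; and the variation $\delta\nu = -\xi^T(\phi)$, a one--line consequence of $\nabla_X\xi = \tau\,X\times\xi$. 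Plugging these into $\delta E(f) = \delta\!\int(H^2 + \tfrac14\overline{K} + \tfrac{k-\tau^2}{4})\,d\mu$, using $|A|^2 = |\mathring A|^2 + 2H^2$, integrating by parts, and applying the Killing identities $\mathrm{div}_M(\xi^T) = -2H\nu$ and $\xi^T(\nu) = -\langle A(\xi^T),\xi^T\rangle$, I expect the equation $\mathcal{E}[f] = 0$ to reduce on any CMC surface to the pointwise identity
\[
H\,|\mathring{A}|^2 \;=\; \tfrac{k-4\tau^2}{2}\bigl(4H\nu^2 + \langle\mathring A(\xi^T),\xi^T\rangle\bigr).
\]

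To establish this identity on a CMC sphere I would invoke the Abresch--Rosenberg theorem: the holomorphic quadratic differential that generalizes the Hopf differential in $E(k,\tau)$ vanishes identically on any CMC sphere. Decomposing this vanishing in a local conformal chart yields two real scalar relations linking $|\mathring A|^2$ and $\langle\mathring A(\xi^T),\xi^T\rangle$ to polynomials in $\nu$ and $(1-\nu^2)$; the target identity should emerge as the appropriate linear combination. The main obstacle is this final algebraic step, namely carefully matching all coefficients in $H$, $\tau$, and $(k-4\tau^2)$. Should the algebra prove unwieldy, a safe fallback is to use the rotational symmetry of any CMC sphere in $E(k,\tau)$ (another consequence of Abresch--Rosenberg) to reduce $\mathcal{E}[f] = 0$ to an ODE identity along the profile curve, verifiable directly from the explicit ODE system satisfied by rotationally invariant CMC surfaces in these geometries.
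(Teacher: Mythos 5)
Your variational setup is exactly the paper's: the same three variation formulas ($\delta\,d\mu=-2H\varphi\,d\mu$, the second variation formula for $H$ with $|A|^2=4H^2-2K_e$, and $\delta\nu=-\langle\nabla\varphi,\xi\rangle$), the same curvature identities for $\overline{K}$ and $\mathrm{Ric}(n,n)$ in $E(k,\tau)$, and the same integration by parts producing a $\mathrm{div}(\nu T)$ term, so the Euler--Lagrange operator you would obtain agrees with the paper's equation. Where you genuinely diverge is in the verification step. The paper does not use the Abresch--Rosenberg differential at all: it takes the rotationally invariant CMC spheres explicitly (your ``fallback''), writes $\nu$, $K$ and $\mathrm{div}(\nu T)$ in terms of the profile curve $(u(s),v(s))$ and the angle $\sigma$, and kills the Euler--Lagrange expression using the ODE system together with the vanishing of the first integral, $\sin\sigma=Hu$. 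Your primary route --- reducing the CMC case of the Euler--Lagrange equation to a pointwise tensorial identity and deducing it from $Q\equiv 0$ on spheres --- is more intrinsic and arguably more illuminating: it makes clear that the ``miracle'' is exactly the Abresch--Rosenberg relation $2(H+i\tau)h_{zz}=(k-4\tau^2)T_z^2$, and it avoids coordinates entirely. The paper's route is more elementary but relies on the classification of CMC spheres as rotationally invariant (which it imports from the same Abresch--Rosenberg circle of ideas anyway).

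One concrete caveat: the intermediate identity you ``expect'' is not quite right as written. Carrying out the reduction with the paper's conventions ($\mathrm{div}_M T=2H\nu$, $T(\nu)=-\langle AT,T\rangle$, and $\frac{k}{2}-2\tau^2=\frac12(k-4\tau^2)$), all the $\nu^2$ terms cancel and the CMC case of the Euler--Lagrange equation collapses to
\begin{equation*}
H\,|\mathring{A}|^2 \;=\; \tfrac{1}{2}(k-4\tau^2)\,\langle\mathring{A}(\xi^T),\xi^T\rangle ,
\end{equation*}
with no $4H\nu^2$ term. This cleaner identity does follow from $Q\equiv 0$: in a conformal chart $Q=0$ gives $h_{zz}=\frac{(k-4\tau^2)T_z^2}{2(H+i\tau)}$, whence both sides equal $\frac{(k-4\tau^2)^2(1-\nu^2)^2}{8}\cdot\frac{H}{H^2+\tau^2}$. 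So your strategy closes, but the coefficient bookkeeping in your displayed identity needs to be redone before the Abresch--Rosenberg step is matched against it.
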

 In addition to Theorem \ref{thm:main} we will prove the following theorem.  
 \begin{theorem} \label{thm:main2}
    For rotationally invariant spheres in $E(k,\tau)$ 
    the functional $E(f)$ attains its minimum exactly at the CMC spheres. 
 \end{theorem}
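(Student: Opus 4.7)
The plan is to exploit the rotational symmetry to reduce the variational problem to a one-dimensional one. A rotationally invariant sphere in $E(k,\tau)$ is the orbit under rotations about a chosen vertical fibre of a profile curve $\gamma$ lying in the half-plane spanned by the distance $r$ from the axis in the base $\mathbb{M}^2(k)$ and the fibre coordinate $z$. Parametrising $\gamma$ by arc length $s \in [0,L]$, every geometric quantity appearing in the integrand of $E(f)$ depends only on $s$: the mean curvature $H(s)$, the vertical component $\nu(s)=\langle N,\partial_z\rangle$ of the unit normal (hence the ambient sectional curvature $\overline{K}=\tau^2+(k-4\tau^2)\nu^2$), and the area element $d\mu$. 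Substituting into \eqref{Eosnovformula} gives
\[
  E(f) = 2\pi \int_0^L \mathcal{L}\bigl(r(s),z(s),r'(s),z'(s)\bigr)\,ds,
\]
with boundary conditions $r(0)=r(L)=0$ so that the profile closes up to a topological sphere.

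Next I would write down the Euler--Lagrange equation of $\mathcal{L}$ and compare it directly with the ODE for rotationally invariant CMC surfaces in $E(k,\tau)$, obtained from the standard formula for the mean curvature of a surface of revolution. By Theorem~\ref{thm:main} applied to the subclass of rotationally invariant variations, the two equations must coincide, so CMC spheres are automatically critical points of the reduced 1-dimensional functional. This step is essentially bookkeeping.

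The real content lies in showing that these critical points are actually minima in the rotationally invariant class. The cleanest route I can see is an algebraic manipulation of the integrand. Using the Gauss equation $K_{\mathrm{int}}=K_e+\overline{K}$ together with $H^2-K_e=\tfrac{1}{4}(k_1-k_2)^2$, and applying the Gauss--Bonnet theorem $\int_M K_{\mathrm{int}}\,d\mu=4\pi$ for a topological sphere, one can rewrite the integrand of \eqref{Eosnovformula} to isolate a manifestly nonnegative ``CMC-defect'' term. In the axially symmetric setting the goal is an identity of the form
\[
  \mathcal{L}(s) = \mathcal{L}_{\mathrm{CMC}}(s) + \mathcal{R}(s)^2 + \frac{d}{ds}\Phi(s),
\]
where $\mathcal{R}(s)^2\ge 0$ vanishes exactly when $H(s)$ is constant along the profile, the total derivative drops out after integration on the closed profile, and $\int_0^L \mathcal{L}_{\mathrm{CMC}}\,ds$ depends only on data fixed within the class (area and topological invariants). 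This would immediately yield $E(f)\ge E(f_{\mathrm{CMC}})$ with equality iff $H$ is constant.

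The main obstacle is producing this completing-the-square identity: in $E(k,\tau)$ the Hopf differential of a CMC surface is generally nonzero, so the naive space-form argument $\int(H^2+\overline{K})\,d\mu\ge 4\pi\chi$ does not apply, and one instead needs the correct ``defect'' to be built from the Abresch--Rosenberg-type holomorphic quadratic differential (the invariant whose vanishing characterises CMC spheres). If such an identity proves elusive, a fallback is an explicit ODE argument: parametrise rotationally invariant spheres by a shooting parameter at the pole (e.g.\ $H(0)$), use the first integrals of the profile equation to express $E(f)$ as a scalar function of this parameter, and verify directly that its minimum occurs precisely at those initial data that yield $H'\equiv 0$.
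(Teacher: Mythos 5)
Your overall strategy coincides with the paper's: reduce $E(f)$ to a one--dimensional integral over the profile curve, split the integrand into a manifestly nonnegative ``CMC--defect'' square plus an exact derivative whose integral over a closed profile is a fixed constant, and conclude. However, the proposal stops precisely at the step that constitutes the entire content of the proof: you write that ``the main obstacle is producing this completing-the-square identity'' and then do not produce it. Without that identity you have a plan, not a proof. It is worth recording that the identity requires no Abresch--Rosenberg differential and is elementary: with the profile $\gamma(s)=(u(s),v(s))$ and $\sigma$ the angle of $\dot{\gamma}$ with $\partial_u$, the reduction formula \eqref{meancurv2} gives $2H=\dot{\sigma}+\bigl(\tfrac{1}{u}-\tfrac{ku}{4}\bigr)\sin\sigma$, whence
\begin{equation*}
H^2=\frac{1}{4}\Bigl(\dot{\sigma}-\Bigl(\frac{1}{u}+\frac{ku}{4}\Bigr)\sin\sigma\Bigr)^2+\frac{\dot{\sigma}\sin\sigma}{u}-\frac{k\sin^{2}\sigma}{4};
\end{equation*}
after multiplying by the area factor $\frac{u\sqrt{1+\tau^2u^2}}{1+\frac{k}{4}u^2}$ and adding the $(\frac{k}{4}-\tau^{2})\nu^{2}+\frac{k}{4}$ terms coming from \eqref{seccurv}, the non--square part collapses to the exact derivative $\frac{d}{ds}\bigl[-\frac{\dot{u}\sqrt{1+\tau^{2}u^{2}}}{(1+\frac{k}{4}u^{2})^{2}}\bigr]$, which contributes $4\pi$ on any closed profile. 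The square vanishes iff $\dot{\sigma}=(\tfrac{1}{u}+\tfrac{ku}{4})\sin\sigma$, i.e.\ $\sin\sigma=Hu$, and together with \eqref{ode_sl2} this forces $\dot{H}=0$.

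Two further points. Your opening step (deriving the Euler--Lagrange equation of the reduced Lagrangian and invoking Theorem~\ref{thm:main}) establishes only criticality, which is irrelevant to the global minimization claim. More seriously, your fallback is not viable: the class of rotationally invariant spheres is an infinite--dimensional family of profile curves from axis to axis, not a one--parameter family indexed by shooting data at the pole, so $E(f)$ cannot be reduced to a scalar function of a single parameter. Only the completing--the--square route (or an equivalent pointwise lower bound) settles the statement.
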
 
\smallskip 
\smallskip
\noindent {\sc Remark 1.}
 We note that:
\begin{equation*}
   E(f) = {\cal W}(f) + \int_M \left( -\frac{3}{4} \overline{K} + \frac{k}{4} - \frac{\tau^2}{4} \right)
   d\mu.
\end{equation*}

\noindent {\sc Remark 2.}
  It can be seen that Theorem \ref{thm:main} agrees with  the
  results obtained earlier for $\nil$ \cite{BT07} and the Lie group $\sl$
  \cite{Berd10}. In addition, the functional $E$ for the case $\mathbb{S}^2 \times \mathbb{R}$ ($k=1$ and $\tau = 0$)
   coincides up to a constant factor with the
  functional \footnote{In \S~6.2, \cite{BT07} the term $H$ should be considered as $2H$.}
  $\int_M (4H^2 +  \overline{K} + 1)$  mentioned in \S~6.2, \cite{BT07}.

\smallskip
\smallskip

The structure of the remaining part of this paper is as follows.
In \S~\ref{sectionmodelspaces} we give the description of the
Riemannian manifolds $E(k,\tau)$. In \S~\ref{sectioncmcspheres} we review the
characterizations of the CMC spheres in these manifolds.
In \S~\ref{sectionproofoftheorem}  we give the details of the proof
of Theorem \ref{thm:main}. 
In \S~\ref{proofoftheorem2} we give the details of the proof of 
Theorem \ref{thm:main2}.

 \section{The Riemannian manifods $E(k, \tau)$}
 \label{sectionmodelspaces}

 The model spaces for the four Thurston geometries:
 $\nil$, $\sl$, $\mathbb{H}^2 \times \mathbb{R}$ and $\mathbb{S}^2 \times
 \mathbb{R}$ belong to the family of Riemannian
 3--manifolds $E(k,\tau), k \in \mathbb{R}, \tau \in \mathbb{R}$ that are  as follows.
 If $k \geqslant 0$ then $E(k,\tau)$ is $\mathbb{R}^3$ with the
 metric:
 \begin{equation}
 \label{metricekt}
    ds^2 = \frac{dx^2 + dy ^2}{\left( 1 + \frac{k}{4} (x^2 + y^2)
    \right)^2} + \left(dz + \frac{\tau (y dx - x dy )}{1 + \frac{k}{4} (x^2 + y^2
    )}\right)^2.
 \end{equation}
 If $k < 0$ then $E(k,\tau)$ is the product $\mathrm{D}^2(\frac{2}{\sqrt{-k}}) \times \mathbb{R}$ with the metric \eqref{metricekt},
 where  $\mathrm{D}^2 (\frac{2}{\sqrt{-k}}) =\{ (x,y) \, | \,  x^2 + y^2 <  \frac{4}{-k}$ \}.
 The family $E(k,\tau)$ is also referred to as
 Bianchi--Cartan--Vranceanu family \cite{Inoguchi02,Inoguchi13}.
 The projection of $E(k,\tau)$ onto the $2$--dimensional domain of constant curvature $k$ given by the map $(x,y,z)\mapsto (x,y)$
 is a Riemannian fibration.
 The fibres of such a fiber bundle are geodesics and its unitary tangent vectors $\frac{\partial}{\partial z}$
 form a Killing vector filed; this field is also referred to as the vertical vector field.
 The parameter $k$ is called the base curvature and $\tau$ the
 bundle  curvature.

 If $k = -1 , \tau = 0 $ then $E (k, \tau)$ is  the product $\mathbb{H}^2 \times
 \mathbb{R}$.
 If $k = 1, \tau = 0$ then $E (k, \tau)$ is obtained from the product $\mathbb{S}^2 \times
 \mathbb{R}$ by removing one fibre.
 If $k = 0, \tau \neq 0$ then $E (k,\tau)$ is the Heisenberg group $\nil$ with the left--invariant
 metric determined by the parameter $\tau$.
 If $k < 0, \tau \neq 0$ then $E (k, \tau)$ is the Lie
 group $\sl$ with the left--invariant metric determined by the
 parameters $k$ and $\tau$.
 For the case $k > 0 , \tau \neq 0$, the manifolds $E (k, \tau)$
 are obtained from the covering of the Berger spheres by removing one fibre.

 For more details we refer the reader to \cite{Scott83,Da07}.
 We will need the following proposition.
 \begin{proposition}
 \label{propseccurv}
 The sectional curvature of a $2$--plane in $E(k,\tau)$ equals:
 \begin{equation}
 \label{seccurv}
     \overline{K}=\tau^{2}+(k-4\tau^{2}) \nu^{2},
 \end{equation}
 where $\nu$ is the scalar product of a unit normal vector to the plane  and the vertical vector
 $\xi = \frac{\partial}{\partial z}$ with respect to the metric
 \eqref{metricekt}.
 \end{proposition}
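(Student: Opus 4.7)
The plan is to exploit the Riemannian submersion $\pi\colon E(k,\tau)\to\mathbb{M}^{2}(k)$ onto the $2$--dimensional space form of constant curvature $k$, whose vertical fibres are the integral curves of $\xi=\partial/\partial z$. From the explicit form of the metric \eqref{metricekt} one verifies that $\xi$ is a unit Killing field, that the fibres are geodesics of $E(k,\tau)$, and that the Levi--Civita connection satisfies the identity
\[
    \nabla_{X}\xi \;=\; \tau\,X\times\xi\qquad\text{for every }X,
\]
where $\times$ is the cross product associated with the canonical orientation on $E(k,\tau)$.

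My first observation is that $\overline{K}$ is affine in $\nu^{2}$. This follows from either of two symmetry arguments: the isometry group of $E(k,\tau)$ acts transitively with isotropy $\mathrm{SO}(2)$ rotating $\xi^{\perp}$, so two $2$--planes at a point are congruent (and hence carry the same sectional curvature) precisely when they share the same value of $\nu^{2}$; alternatively, in dimension $3$ the Riemann tensor is determined by the Ricci tensor, and the same symmetry forces $\mathrm{Ric}=\alpha\,g+\beta\,\xi^{\flat}\otimes\xi^{\flat}$, which unwinds into an affine expression for $\overline{K}$ in $\nu^{2}$. It therefore suffices to compute $\overline{K}$ at the two endpoints $\nu=0$ and $\nu=1$.

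For $\nu=1$, the plane is horizontal with an orthonormal basis $\{e_{1},e_{2}\}\subset\xi^{\perp}$. O'Neill's formula for a Riemannian submersion with totally geodesic fibres gives
$\overline{K}(e_{1}\wedge e_{2})=K^{\ast}-3\,|A_{e_{1}}e_{2}|^{2}$,
where $K^{\ast}=k$ is the base curvature and the O'Neill tensor, computed from $\nabla_{X}\xi=\tau X\times\xi$, satisfies $|A_{e_{1}}e_{2}|^{2}=\tau^{2}$; this yields the value $k-3\tau^{2}$. For $\nu=0$, the plane is vertical and spanned by $\{e_{1},\xi\}$ with $e_{1}$ a horizontal unit vector. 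Choosing $e_{1}$ so that $[\xi,e_{1}]=0$ (possible because $z$--translations are isometries preserving the horizontal distribution) and using $\nabla_{e_{1}}\xi=\tau e_{2}$, $\nabla_{e_{2}}\xi=-\tau e_{1}$, a direct computation of $R(e_{1},\xi)\xi=-\nabla_{\xi}\nabla_{e_{1}}\xi=\tau^{2}e_{1}$ gives $\overline{K}(e_{1}\wedge\xi)=\tau^{2}$. Affine interpolation in $\nu^{2}$ between $\tau^{2}$ and $k-3\tau^{2}$ then produces exactly $\tau^{2}+(k-4\tau^{2})\nu^{2}$.

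The main obstacle is essentially bookkeeping: establishing the identity $\nabla_{X}\xi=\tau X\times\xi$ from \eqref{metricekt} with the correct sign, and verifying that the sign conventions in O'Neill's formula produce the $-3\tau^{2}$ correction for the horizontal plane. Once these are pinned down, the two endpoint computations are short and the affine interpolation finishes the proof.
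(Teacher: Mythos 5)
Your argument is correct, but it is a genuinely different route from the paper's: the paper's ``proof'' is a one--line citation to the explicit formula for the full Riemann curvature tensor of $E(k,\tau)$ in Proposition~2.1 of Daniel's paper \cite{Da07}, from which \eqref{seccurv} is read off by contraction. You instead give a self--contained structural argument: the dimension--three identity $\overline{K}(P)=\tfrac{s}{2}-\mathrm{Ric}(n,n)$ together with the $\mathrm{SO}(2)$ isotropy forces $\overline{K}$ to be affine in $\nu^{2}$, and the two endpoint values $\tau^{2}$ (vertical plane, via $\nabla_{X}\xi=\tau\,X\times\xi$ and $R(e_{1},\xi)\xi=\tau^{2}e_{1}$) and $k-3\tau^{2}$ (horizontal plane, via O'Neill's formula with $|A_{e_{1}}e_{2}|^{2}=\tau^{2}$) pin down the coefficients. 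Both endpoint computations and the interpolation check out, $\tau^{2}+(k-3\tau^{2}-\tau^{2})\nu^{2}=\tau^{2}+(k-4\tau^{2})\nu^{2}$. Two small remarks: your first symmetry argument (congruence of planes with equal $\nu^{2}$) by itself only shows $\overline{K}=F(\nu^{2})$ for some function $F$; affinity really comes from the quadratic dependence of $\mathrm{Ric}(n,n)$ on $n$ in dimension three, i.e.\ from your second argument, so the two arguments are not quite interchangeable. And the identity $\nabla_{X}\xi=\tau\,X\times\xi$, which you defer as bookkeeping, is exactly the content you would otherwise import from \cite{Da07}, so the net saving over the paper's citation is that you need only this first--order identity rather than the full curvature tensor. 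What your approach buys is transparency and portability: it isolates which geometric inputs (unit Killing fibration, bundle curvature $\tau$, base curvature $k$) produce each term of \eqref{seccurv}, whereas the paper's approach is shorter but opaque.
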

 \begin{proof}
   The identity \eqref{seccurv} can be obtained directly from the general formula
   for the Riemann curvature tensor of $E(k,\tau)$ shown in Proposition~2.1, \cite{Da07}.
 \end{proof}

 \section{The CMC spheres in $E(k,\tau)$}
 \label{sectioncmcspheres}

 The rotationally invariant CMC surfaces in the products
 $\mathbb{H}^2 \times \mathbb{R},\mathbb{S}^2 \times
 \mathbb{R}$ and the Heisenberg group $\nil$  were described in \cite{HH,PR} and \cite{Tomter93,FMP99,CPR} respectively.
 The case of the Lie group $\sl$ and the Berger spheres were studied
 in \cite{Esp09} and \cite{Torralbo10}.
 In order to describe rotationally invariant CMC surfaces in $E(k,\tau)$ we will follow the approach used in
 \cite{Tomter93,FMP99} for $E(0,\frac{1}{2})$.


 For the cylindrical coordinates $\rho, \theta, z$ in $E(k,\tau)$ such that  $ x= \rho \cos \theta, y = \rho \sin
 \theta, z=z$ the metric \eqref{metricekt}  has the form:
 \begin{equation}
 \label{metric2-sl2}
 ds^2 =
 \frac{1}{(1+\frac{k}{4}\rho^{2})^{2}}d\rho^{2}+\frac{\rho^{2}+\tau^{2}\rho^{4}}{(1+\frac{k}{4}\rho^{2})^{2}}d\theta^{2}-
 \frac{2\tau \rho^{2}}{1+\frac{k}{4}\rho^{2}}dzd\theta+dz^{2}.
 \end{equation}
 We note that $\rho \in [0, R)$, where $R = \frac{2}{\sqrt{-k}}$ if $k < 0$
 and $R = \infty$ if $k \geqslant 0$.

 The group $\mathrm{SO}(2)$ acts on  $E(k,\tau)$  by rotations
 $ \theta \mapsto \theta +  const$ around $z$--axis.
 The rotations are isometries and the factor--space
 $E (k,\tau) \slash \mathrm{SO}(2)$ is the $2$--dimensional domain
 $\mathrm{B} (k,\tau) = \lbrace (u, v)| u \in [0, R ), v \in \mathbb{R} \rbrace$ with the metric:
 \begin{equation}
 \label{metric-half-plane}
  d \widetilde{s}^2 = \frac{1}{(1+\frac{k}{4}u^{2})^{2}}du^{2}+\frac{1}{1+\tau^{2}u^{2}}dv^{2},
 \end{equation}
 so the projection
 $E(k,\tau) \rightarrow \mathrm{B}(k,\tau)$ is a Riemannian submersion.

 For a given rotationally invariant surface we define by $\gamma(s)=(u(s),v(s))$ its projection onto $\mathrm{B} (k,\tau)$,
 where $s$ is a natural parameter with respect to the metric \eqref{metric-half-plane}.
 Let $\sigma$ be the angle between $\dot{\gamma}$ and $\frac{\partial}{\partial
 u}$. It can be verified (cf. \cite{FMP99}, eq.~2) that for the metric \eqref{metric-half-plane}
 the geodesic curvature of $\gamma(s)$  equals:
 \begin{equation}
 \label{geodcurv2}
    \widetilde{k} =
    \dot{\sigma}-\frac{\tau^{2}u(1+\frac{k}{4}u^{2})}{(1+\tau^{2}u^{2})}\sin\sigma,
 \end{equation}
 The mean curvature of a rotationally invariant surface is given by
 the reduction theorem (cf. \cite{FMP99}, p.~178) as follows:
 \begin{equation}
 \label{meancurv}
    H = \frac{1}{2} \left( \widetilde{k} - \frac{\partial}{\partial n} \ln \mu \right),
 \end{equation}
 where $n = (-(1+\frac{k}{4}u^2) \sin \sigma, \sqrt{1+\tau ^2 u^2} \cos \sigma)$ is a normal vector in $\mathrm{B}(k, \tau)$ to $\gamma(s)$  and
 $\mu  = \frac{u \sqrt{1+\tau^2 u^2}}{1+\frac{k}{4}u^2}$ is the
 factor of the volume form for an $\mathrm{SO}(2)$ orbit with respect to the metric \eqref{metric2-sl2}.
 From \eqref{geodcurv2} and \eqref{meancurv} we obtain:
 \begin{equation}
 \label{meancurv2}
   H= \frac{1}{2} \left( \dot{\sigma} + \left(\frac{1}{u} - k \frac{u}{4}
   \right) \sin \sigma \right).
 \end{equation}
 Thus we obtain that
 for a profile $\gamma (s) = (u(s), v(s))$ of a rotationally invariant CMC surface
 the following system of ODE is satisfied:
 \begin{equation}
 \label{ode_sl2}
 \begin{cases}
 \dot{u} = \left( 1+\frac{k}{4} u^2 \right) \cos \sigma, \\
 \dot{v} = \sqrt{1+ \tau^2 u^2 } \sin \sigma, \\
 \dot{\sigma} = 2H - \left( \frac{1}{u}- k \frac{u}{4} \right)  \sin
 \sigma.
 \end{cases}
 \end{equation}
 It can be straightforwardly verified that the system \eqref{ode_sl2} has the following first integral:
 \begin{equation}
 \label{firstintegral}
 J = \frac{u}{1+\frac{k}{4} u^2} \left( \sin \sigma - Hu \right).
 \end{equation}
 Then we have the following proposition.
 \begin{proposition}
 \label{rotinvcmcsphery}   If $k \leqslant 0$ then for any $H$ such that $H^2 > \frac{-k}{4}$ there exists
    a rotationally invariant CMC sphere of constant mean curvature $H$ in $E(k,\tau)$;
    moreover, if $H^2 \leqslant \frac{-k}{4}$ then there exists no CMC  sphere of constant mean curvature $H$ in $E(k,\tau)$.
    If $k > 0$ then for any $H \neq 0$ there exists a rotationally invariant CMC sphere of constant mean curvature $H$ in $E(k,\tau)$.
    For every rotationally invariant CMC sphere in  $E(k,\tau)$ the first integral \eqref{firstintegral} vanishes: $J=0$.
    The CMC spheres in $E(k,\tau)$ are unique up to isometries.
 \end{proposition}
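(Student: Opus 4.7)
The strategy is to reduce the construction to the planar ODE system \eqref{ode_sl2} in $\mathrm{B}(k,\tau)$, exploit the first integral \eqref{firstintegral}, and invoke the classification results cited at the start of \S\ref{sectioncmcspheres} for the parts that transcend the rotationally invariant setting.

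I would first establish $J=0$ for the profile of any rotationally invariant CMC sphere. Such a profile $\gamma(s)=(u(s),v(s))$ must meet the axis $\{u=0\}$ at the two poles, where \eqref{firstintegral} gives $J=0$ trivially; conservation of $J$ along solutions of \eqref{ode_sl2} then propagates this value. For $u>0$ the identity $J=0$ becomes $\sin\sigma=Hu$, which excludes the case $H=0$ (that would force $\sigma\equiv 0$, hence $\dot u>0$ always, precluding any closure on the axis) and pins the maximal radius of the profile at $u_{\max}=1/|H|$, where $\cos\sigma$ vanishes.

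For existence I would integrate \eqref{ode_sl2} forward from the initial data $u(0)=0$, $\sigma(0)=0$, $v(0)=0$. The constraint $\sin\sigma=Hu$ reduces the radial equation to $\dot u=(1+(k/4)u^{2})\sqrt{1-H^{2}u^{2}}$, whose quadrature near $u=u_{\max}$ is convergent, so $u$ reaches $u_{\max}=1/|H|$ in finite time; by the obvious reflection symmetry of the reduced $(u,\sigma)$-system, the profile descends symmetrically back to the axis, closes up, and yields under rotation a topological sphere, smooth at the poles because $\sin\sigma=Hu\to 0$ as $u\to 0$. The profile lies in $\mathrm{B}(k,\tau)$ exactly when $u_{\max}<R$, which unwinds to $H^{2}>-k/4$ if $k\leq 0$ and to $H\neq 0$ if $k>0$. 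Uniqueness of the rotationally invariant sphere is immediate from ODE uniqueness: the only ambiguity is the initial value $v(0)$, which corresponds to a vertical translation, an isometry of $E(k,\tau)$.

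The main obstacle is the stronger claim that, for $k\leq 0$ and $H^{2}\leq -k/4$, \emph{no} CMC sphere exists at all, and that the CMC spheres in $E(k,\tau)$ are unique up to isometries without any a priori symmetry assumption. These facts rely on the Abresch--Rosenberg holomorphic quadratic differential together with the classification of CMC spheres in $\mathbb{H}^{2}\times\mathbb{R}$ \cite{HH,PR}, in $\nil$ \cite{Tomter93,FMP99,CPR}, in $\sl$ \cite{Esp09}, and in the Berger spheres \cite{Torralbo10}; I would appeal to these results rather than reprove them.
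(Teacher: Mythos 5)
Your proposal is correct and follows essentially the same route as the paper: a qualitative analysis of the profile system \eqref{ode_sl2} via the first integral \eqref{firstintegral} for the rotationally invariant existence/nonexistence claims, combined with an appeal to the Abresch--Rosenberg classification for the general uniqueness and nonexistence statements. The paper's own proof simply delegates the ODE analysis to the cited references, so your sketch actually supplies more detail than the published argument.
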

 \begin{proof} The proof is based on an analysis of a qualitative behavior
  of the solutions of \eqref{ode_sl2} depending on the values of $J$ and $H$.
  Such an analysis is straightforward and it was done for $E(0,\frac{1}{2})$ in \cite{Tomter93,FMP99};
  the case of $E(-1,-\frac{1}{2})$  was shown in \cite{Berd10}.
  The uniqueness of the CMC spheres was proved in \cite{AR04,AR05}.
  Also, see \cite{FM10} for the complete proof of Proposition \ref{rotinvcmcsphery}.  
  \end{proof}
 By  Proposition \ref{rotinvcmcsphery}  we obtain that on a rotationally invariant CMC sphere in $E(k,\tau)$
 the following equality  holds:
 \begin{equation}
 \label{sinussigmaravnohu}
    \sin \sigma  = H u.
 \end{equation}
 
 \smallskip
 \smallskip
 
 \noindent {\sc Remark 3.}
    Although there exists no a minimal sphere in $E(k,\tau)$ such
    spheres exist for $\mathbb{S}^2 \times \mathbb{R}$ and the
    Berger spheres. We recall that for $k>0$ the manifolds $E(k,\tau)$
    are obtained from the corresponding homogeneous manifolds by
    removing one fibre.
 
 \section{The proof of Theorem \ref{thm:main}}
 \label{sectionproofoftheorem}

 For an immersion $f: M \rightarrow E(k,\tau)$ of a closed orientable surface $M$ into $E(k,\tau)$ set:
 \begin{equation}
  \label{genwillm1}
     E_{\alpha, \beta} (f) = \int_M \left( H^2 + \alpha \overline{K} + \beta \right) d \mu.
 \end{equation}

 Let $F : M \times [0,1] \rightarrow E(k,\tau)$ be a normal variation of
 the immersion $f$, i.e., $F(p,0) = f(p)$ for all $p \in M$ and $\frac{\partial F (p,t)}{\partial t} = \varphi n$, where $n$ is
 the unit normal vector field to $M$ and the velocity $\varphi$ is a smooth function on $M$.
 We will denote by $\delta$ the operator $\frac{\partial}{\partial t}|_{t=0}$.
 We will need the following proposition.
 \begin{proposition}
 Under a normal variation with the velocity $\varphi$ the following identities hold:
 \begin{equation} \label{deltamu} \delta d \mu  = - 2 H \varphi d \mu, \end{equation}
 \begin{equation} \label{deltan} \delta n = - \nabla \varphi, \end{equation}
 \begin{equation} \label{deltaH} 2 \delta H = \Delta \varphi + (4H^2 - 2K_e + \mathrm{Ric}(n,n)) \varphi, \end{equation}
 where $\nabla$ is the gradient  and $\Delta$ is the Laplace--Beltrami operator on $M$,
 $K_e$ is the extrinsic Gauss curvature.
 \end{proposition}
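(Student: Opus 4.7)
The plan is to work in local coordinates $(u^1,u^2)$ on $M$ and differentiate in $t$ the defining relations for the induced metric $g_{ij}=\langle F_i,F_j\rangle$, the unit normal $n$, and the second fundamental form $h_{ij}=\langle\overline{\nabla}_{F_i}F_j,n\rangle$, where $F_i=\partial F/\partial u^i$ and $\overline{\nabla}$ denotes the Levi--Civita connection of $E(k,\tau)$. Since $\partial_t F=\varphi n$ commutes with $\partial_i$, one has $\partial_t F_i=\varphi_i n+\varphi\,\overline{\nabla}_{F_i}n$. Using $\langle n,F_j\rangle=0$ and the Weingarten identity $\langle\overline{\nabla}_{F_i}n,F_j\rangle=-h_{ij}$, differentiating $g_{ij}$ gives $\delta g_{ij}=-2\varphi h_{ij}$, hence $\delta d\mu=\tfrac{1}{2}g^{ij}\delta g_{ij}\,d\mu=-2H\varphi\,d\mu$, which is \eqref{deltamu}. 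In the same way, $\delta g^{ij}=2\varphi h^{ij}$, a fact needed in the last step.

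For \eqref{deltan}, differentiating $\langle n,n\rangle=1$ yields $\langle\delta n,n\rangle=0$, so $\delta n$ is tangent to $M$. Differentiating $\langle n,F_i\rangle=0$ and substituting the expression for $\partial_tF_i$ obtained above gives $\langle\delta n,F_i\rangle=-\varphi_i$, and therefore $\delta n=-g^{ij}\varphi_iF_j=-\nabla\varphi$.

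For \eqref{deltaH} I would follow the standard strategy: write $2H=g^{ij}h_{ij}$ and compute
\begin{equation*}
2\,\delta H=(\delta g^{ij})h_{ij}+g^{ij}\delta h_{ij}=2\varphi|A|^2+g^{ij}\delta h_{ij},
\end{equation*}
where $|A|^2=h_{ij}h^{ij}$. To handle $\delta h_{ij}$ I would commute $\partial_t$ with $\overline{\nabla}_{F_i}$ at the cost of the ambient curvature term $\overline{R}(\varphi n,F_i)F_j$; after using the symmetry $\overline{\nabla}_{\partial_t}F_i=\overline{\nabla}_{F_i}(\varphi n)$, pairing with $n$, and applying \eqref{deltan} together with Gauss--Weingarten, the standard computation produces
\begin{equation*}
g^{ij}\delta h_{ij}=\Delta\varphi+\overline{\mathrm{Ric}}(n,n)\varphi-2|A|^2\varphi+2\varphi|A|^2\cdot(\text{correction}),
\end{equation*}
which after simplification collapses to the well-known identity $2\,\delta H=\Delta\varphi+|A|^2\varphi+\overline{\mathrm{Ric}}(n,n)\varphi$. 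Finally, denoting by $k_1,k_2$ the principal curvatures, one has $k_1+k_2=2H$ and $k_1k_2=K_e$, whence $|A|^2=k_1^2+k_2^2=4H^2-2K_e$, which substituted above gives \eqref{deltaH}.

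The main obstacle is the computation of $\delta h_{ij}$: tracking the curvature term produced by commuting $\partial_t$ through the ambient connection and reconciling the various tangential and normal contributions requires care, whereas the rest of the proof reduces to differentiating the defining relations and applying Gauss--Weingarten. The final algebraic identification $|A|^2=4H^2-2K_e$ is what converts the classical formula into the form stated in \eqref{deltaH}.
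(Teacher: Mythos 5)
Your computation is correct and is precisely the standard first-variation argument that the paper itself does not reproduce but simply cites from Huisken--Polden: the identities $\delta g_{ij}=-2\varphi h_{ij}$, $\delta d\mu=-2H\varphi\,d\mu$, $\delta n=-\nabla\varphi$, and the trace of $\delta h_{ij}$ all check out, as does the final identification $|A|^2=k_1^2+k_2^2=4H^2-2K_e$ that converts the classical formula $2\,\delta H=\Delta\varphi+\bigl(|A|^2+\mathrm{Ric}(n,n)\bigr)\varphi$ into the stated \eqref{deltaH}. The only under-specified point is the unexplained ``correction'' factor in your intermediate display for $g^{ij}\delta h_{ij}$ (the clean statement is $g^{ij}\delta h_{ij}=\Delta\varphi-|A|^2\varphi+\mathrm{Ric}(n,n)\varphi$, which combined with $\delta g^{ij}h_{ij}=2\varphi|A|^2$ gives the result), but since you land on the correct classical identity this is a presentational blemish rather than a gap.
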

 \begin{proof}
 The proof is standard, one may look it up in \cite{Huisken99}.
\end{proof}
 It follows from Proposition \ref{propseccurv} that
 the term $\mathrm{Ric}(n,n)$ equals
 $k -2 \tau^2 - (k - 4 \tau^2)  \nu ^2$.
 Therefore we have that:
 \begin{equation}
 \label{variationH1}
   2 \delta H =  \Delta \varphi  + (4H^2 - 2 K_e   +  k -2 \tau^2 - (k - 4 \tau^2)  \nu ^2) \varphi.
 \end{equation}
 Let $T$ be the projection of the vertical field $\xi$ on $M$, i.e., $T = \xi - \nu n$.
 By \eqref{deltan} we have that
 $\delta \nu = \delta \langle n, \xi \rangle = - \langle \nabla \varphi , \xi
 \rangle$. Therefore we obtain:
 \begin{align}
 \label{deltanu}
   \int_M \nu \delta \nu  d \mu = \int_M  \mathrm{div} (\nu T) \varphi d \mu.
 \end{align}
 By Proposition \ref{propseccurv} we have:
 \begin{equation}
    E_{\alpha, \beta} (f) = \int_M (H^2 + \alpha (k - 4 \tau ^2) \nu ^2 + \beta + \alpha \tau^2) d \mu.
 \end{equation}
 Then by \eqref{variationH1},\eqref{deltanu} and \eqref{deltamu} we obtain:
 \begin{align}
 \delta E_{\alpha,\beta} (f) =   \int_M ( & \Delta H +  H ( 2 H^2  - 2 K_e - (1+ 2 \alpha) (k -4\tau^2)
    \nu     ^2   +  \\
    & + k -2 \tau^2 - 2 \beta  -2 \alpha \tau^2 ) + 2 \alpha (k - 4 \tau ^2) \mathrm{div} (\nu T)) \varphi d
    \mu. \nonumber
\end{align}
Therefore the Euler--Lagrange equation of the functional
\eqref{genwillm1} is as follows:
 \begin{align}
 \label{euler1}
     \Delta H +  H (  & 2 H^2  - 2 K_e - (1+ 2 \alpha) (k -4\tau^2)
    \nu     ^2   + \\
    & + k -2 \tau^2 - 2 \beta  -2 \alpha \tau^2 ) + 2 \alpha (k - 4 \tau ^2) \mathrm{div} (\nu
    T) = 0. \nonumber
\end{align}
 By the Gauss theorem we obtain $ K_e = K - \overline{K} = K -
 (k- 4 \tau^2) \nu ^2 -
 \tau^2$, where $K$ is the intrinsic Gauss curvature.
 Then  \eqref{euler1} can be rewritten as follows:
 \begin{equation}
 \label{euler2}
 \begin{split}
    \Delta H + H ( & 2 H^2  - 2 K + (1- 2 \alpha) (k -4\tau^2) \nu ^2
    + \\
     & + k  - 2 \beta - \alpha \tau^2 ) +  2 \alpha (k - 4 \tau ^2) \mathrm{div} (\nu
      T)  = 0.
 \end{split}
 \end{equation}

 Consider a CMC sphere in $E(k,\tau)$. For the coordinates on this sphere we choose
 $\theta$ and $s$; recall that $\theta$ is an angle from the cylindrical coordinate system in $E(k,\tau)$ and
 $s$ is the natural parameter on the projection $\gamma(s) = (u(s),v(s))$ of this sphere onto $\mathrm{B}(k,\tau)$.
 By \eqref{metric2-sl2} for these coordinates
 the metric on a CMC sphere is as
 follows:
 \begin{equation}
 \label{metricsphere3}
 \frac{\rho^{2}+\tau^{2}\rho^{4}}{(1+\frac{k}{4}\rho^{2})^{2}} d\theta ^2 + ds^2.
 \end{equation}
 By \eqref{metric-half-plane}, \eqref{ode_sl2} and \eqref{metricsphere3} it can be straightforwardly verified that on a CMC sphere in $E(k,\tau)$:
 \begin{equation}
 \label{nucmc}
   \nu = \frac{\cos \sigma} {\sqrt{1 + \tau^2 u^2}},
 \end{equation}
 \begin{equation}
 \label{kcmc}
    K =
   -\frac{1+\frac{k}{4} u^2}{u\sqrt{1+\tau^2 u^{2}}}\frac{d}{ds^{2}}\frac{u\sqrt{1+\tau^2 u^2}}{1+ \frac{k}{4} u^2},
 \end{equation}
 \begin{equation}
 \label{divcmc}
    \mathrm{div} (\nu
      T) =
     \frac{1+\frac{k}{4}u^2}{u\sqrt{1+\tau^2 u^2}}
     \frac{d}{ds} \frac{u \cos \sigma \sin \sigma}{(1+\frac{k}{4} u^2)\sqrt{1+ \tau^2
     u^2}}.
 \end{equation}

  Put $\alpha = \frac{1}{4}$ and $\beta = \frac{k}{4}-
 \frac{\tau^2}{4}$. Substituting \eqref{nucmc},\eqref{kcmc} and \eqref{divcmc} into the left hand side of \eqref{euler2} we obtain that it equals:
 \begin{equation}
 \label{leftsideeuler1}
 \begin{split}
    2 H^3 + H \left( 2 \frac{1+\frac{k}{4} u^2}{u\sqrt{1+\tau^2 u^{2}}}\frac{d}{ds^{2}}\frac{u\sqrt{1+\tau^2 u^2}}{1+ \frac{k}{4}
    u^2} + \frac{1}{2} (k -4\tau^2)  \frac{\cos ^2  \sigma} {1 + \tau^2
    u^2} +
     \frac{k}{2} \right) + \\
     +\frac{1}{2}(k - 4 \tau ^2) \frac{1+\frac{k}{4}u^2}{u\sqrt{1+\tau^2 u^2}}
     \frac{d}{ds} \frac{u \cos \sigma \sin \sigma}{(1+\frac{k}{4} u^2)\sqrt{1+ \tau^2
     u^2}}
 \end{split}
 \end{equation}
 Using the equation \eqref{sinussigmaravnohu} and the system
 \eqref{ode_sl2} it can be verified that the expression \eqref{leftsideeuler1}
 vanishes on a CMC sphere. Theorem \ref{thm:main} is proved.

 \section{The proof of Theorem \ref{thm:main2}}
 \label{proofoftheorem2}
 
 Let us substitute \eqref{seccurv} into the formula \eqref{Eosnovformula} for
 the functional $E(f)$. Then we have:  
  \begin{equation}
  \label{Eosnovformulachereznu}  
  E(f) = \int_M  
          \left( H^2 +\left(\frac{k}{4}-\tau^{2}\right) \nu^{2} + \frac{k}{4}            
          \right)d\mu.          
  \end{equation} 
  Let us consider a rotationally invariant sphere in $E(k,\tau)$ 
  defined by a curve $\gamma (s) = (u(s),v(s)) \subset \mathrm{B} (k,\tau)$. 
  By \eqref{meancurv2} we can represent $H^2$ as follows: 
  \begin{equation} 
  \label{hsquarerotinv}   
     H^2 =  \frac{1}{4} \left( \dot{\sigma} - 
     \left( \frac{1}{u} + k \frac{u}{4} \right)  \sin \sigma \right)^2  + 
      \left( \frac{\dot{\sigma}  \sin \sigma }{u} - 
       k \frac{\sin^2 \sigma }{4}\right).
  \end{equation}
  For a rotationally invariant surface we have: 
  \begin{equation}  
  \label{nusquarerotinv}   
     \nu^2  =   \frac{\cos^2 \sigma}{1 + \tau^2 u ^2},
  \end{equation}
  \begin{equation}
  \label{dmurotinv}   
     d \mu =  \frac{u \sqrt{1+\tau^2 u^2}}{1+\frac{k}{4}u^2} ds.
  \end{equation}
  Substituting \eqref{hsquarerotinv}, \eqref{nusquarerotinv} and 
  \eqref{dmurotinv} into \eqref{Eosnovformulachereznu} we obtain: 
 \begin{equation}  
 \label{Erotinv}
 \begin{split}   
    E(f) = 2 \pi \int_\gamma  \frac{1}{4} \left( \dot{\sigma} - 
    \left( \frac{1}{u} + k \frac{u}{4} \right)  \sin \sigma \right)^2  \frac{u \sqrt{1+\tau^2 u^2}}{1+\frac{k}{4}u^2} ds +  \\
    2 \pi \int_\gamma  \left( \frac{\dot{\sigma}  \sin \sigma }{u} - 
    k \frac{\sin^2 \sigma }{4}
    + \left(\frac{k}{4}-\tau^{2}\right) \frac{\cos^2 \sigma}{1 + \tau^2 u ^2}  + \frac{k}{4}
    \right) \frac{u \sqrt{1+\tau^2 u^2}}{1+\frac{k}{4}u^2} ds.
 \end{split} 
 \end{equation}  
  By \eqref{ode_sl2} we have: $\cos \sigma = 
  \frac{\dot{u}}{1 + \frac{k}{4} u^2}$. Substituting this into the integrand of the second summand in \eqref{Erotinv} we obtain: 
  \begin{equation}
  \label{secondsummandintegrand}   
     -\frac{\ddot{u}\sqrt{1+\tau^{2}u^{2}}}{(1+\frac{k}{4}u^{2})^{2}}+\frac{u\dot{u}^{2}}{(1+\frac{k}{4}u^{2})^{3}}\left(\frac{3}{4}k\sqrt{1+\tau^{2}u^{2}}+\left(\frac{k}{4}-\tau^{2}\right)\frac{1}{\sqrt{1+\tau^{2}u^{2}}}\right).
  \end{equation}
 It can be verified that the expression 
 \eqref{secondsummandintegrand} is equal to 
 $\frac{d}{ds}\left[-\frac{\dot{u}\sqrt{1+\tau^{2}u^{2}}}{(1+\frac{k}{4}u^{2})^{2}}\right]$. Therefore, for a rotationally invariant sphere 
 the second summand in \eqref{Erotinv} is equal to $4 \pi$.
 
 The first summand in \eqref{Erotinv} is nonnegative. It vanishes iff
 the following holds: 
 \begin{equation}
 \label{firstsummandintegrand}   
    \dot{\sigma} - \left(\frac{1}{u} + k \frac{u}{4} \right)\sin\sigma = 0.
 \end{equation} 
 
 It follows from \eqref{ode_sl2} that \eqref{firstsummandintegrand} 
 holds iff a rotationally invariant sphere is CMC.  
 Theorem \ref{thm:main2} is proved.

 \smallskip
 \smallskip
 
 \noindent  {\sc Acknowledgments}.
  The authors gratefully thank Iskander Taimanov for his original idea to study analogs of the Willmore functional in the Thurston geometries.



\vspace{5mm}

\begin{flushright} 

\begin{minipage}{0.45\linewidth}

 Dmitry Berdinsky
 
 Department of Computer Science 
 
 The University of Auckland
 
 Private Bag 92019 
 
 Auckland, 1142
 
 New Zealand
 
 e-mail: berdinsky@gmail.com

 \vspace{5mm}
 
 Yuri Vyatkin

 Department of Mathematics
 
 The University of Auckland
 
 Private Bag 92019 
 
 Auckland, 1142
 
 New Zealand

 e-mail: yuri.vyatkin@gmail.com

\end{minipage}

\end{flushright} 

\clearpage

\end{document}